\newtheorem{theorem}{Theorem}
\newtheorem{corollary}[theorem]{Corollary}
\newtheorem{lemma}[theorem]{Lemma}
\newtheorem{proposition}[theorem]{Proposition}
\theoremstyle{remark}
\newtheorem{remark}[theorem]{Remark}
\newcommand{\lur}{\mathsf{LUR}}
\newcommand{\norm}[1]{\left\|#1\right\|}
\def\@tvsp{\mathchoice{{}\mkern-4.5mu}{{}\mkern-4.5mu}{{}\mkern-2.5mu}{}}
\def\ltrivert{\left|\@tvsp\left|\@tvsp\left|}
\def\rtrivert{\right|\@tvsp\right|\@tvsp\right|}
\newcommand{\tnorm}[1]{\ltrivert #1 \rtrivert}
\newcommand{\normi}[1]{\norm{ #1 }_{\infty}}
\newcommand{\normt}[1]{\norm{ #1 }_2}
\begin{document}

\title[On wLUR norms which are not LUR]{On weakly locally uniformly rotund norms which are not locally uniformly rotund}
\author[S. Draga]{Szymon Draga}
\email{szymon.draga@gmail.com}
\address{Institute of Mathematics, University of Silesia, Bankowa 14, 40-007 Katowice, Poland}
\email{szymon.draga@gmail.com}
\date{}

\begin{abstract}
We show that every infinite-dimensional Banach space with separable dual admits an equivalent norm which is weakly locally uniformly rotund but not locally uniformly rotund.
\end{abstract}

\subjclass[2010]{46B03, 46B20}
\keywords{Asplund spaces, locally uniformly rotund norms, renormings}

\maketitle

\section{Introduction}
Recall that a norm in a Banach space is called \textit{strictly convex} ($\mathsf{SC}$) if for arbitrary points $x$, $y$ from the unit sphere the equality $\norm{x+y}=2$ implies that ${x=y}$. The norm is called \textit{weakly locally uniformly rotund} ($w\lur$) if for arbitrary points $x_n$ ($n=1,2,\ldots$) and $x$ from the unit sphere the equality $\lim_{n\to\infty}\norm{x_n+x}=2$ implies the weak convergence of the sequence $(x_n)_{n=1}^\infty$ to $x$; if the last convergence is strong, then the norm is called \textit{locally uniformly rotund} ($\lur$). In the preceding definitions it is sufficient to require that $\lim_{n\to\infty}\norm{x_n}=\norm{x}$ and $\lim_{n\to\infty}\norm{x_n+x}=2\norm{x}$.

It is clear that $w\lur\implies\mathsf{SC}$ and $\lur\implies w\lur$; it is also well-known that none of these implications reverses. Indeed, the space $\ell_\infty$ can be renormed in a strictly convex manner, but it does not admit an equivalent $w\lur$ norm (cf. \cite[\S 4.5]{diestel1975}). M.A. Smith \cite[Example 2]{smith1978} gave an example of a $w\lur$ norm on $\ell_2$ which is not $\lur$, however, in the next section we shall present a somewhat simpler example (which is a particular case of our main result, but slightly different).

D. Yost \cite[Theorem 2.1]{yost1981} showed that the implication $w\lur\implies\mathsf{SC}$ does not reverse in the strong sense, namely, that every infinite-dimensional separable Banach space admits an equivalent strictly convex norm which is not $w\lur$. Of course, the analogous theorem does not hold for the implication $\lur\implies w\lur$, because of the Schur property, e.g., of the space $\ell_1$. However, it is true when assuming that the dual of the underlying space is separable; this is what our main result states:

\begin{theorem}\label{main_result}
Every infinite-dimensional Banach space with separable dual admits an equivalent $w\lur$ norm which is not $\lur$.
\end{theorem}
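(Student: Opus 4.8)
The plan is to superimpose on a deliberately badly-behaved norm a quadratic term that detects exactly the weak topology. Since $X^*$ is separable, I would fix a sequence $(f_k)_{k=1}^\infty$ that is norm-dense in the unit ball of $X^*$ and set $Tx=(2^{-k}f_k(x))_{k=1}^\infty$; then $T\colon X\to\ell_2$ is a bounded injective (in fact compact) operator, and since the $f_k$ separate the points of $X$, for bounded sequences $\normt{Tx_n-Tx}\to0$ is equivalent to $x_n\to x$ weakly (density of $(f_k)$ in one direction, dominated convergence in the other). For the ``bad'' part I would fix $z\in X$ and $g\in X^*$ with $g(z)=\norm{z}=\norm{g}=1$ and put $\norm{x}_0=\max\{|g(x)|,\tfrac12\norm{x}\}$, an equivalent norm whose unit sphere carries a flat face through $z$. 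The candidate norm is then
\[
\tnorm{x}^2=\norm{x}_0^2+\normt{Tx}^2 ,
\]
which is an equivalent norm because $\normt{Tx}\le C\norm{x}_0$.

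For the $w\lur$ property I would assume $\tnorm{x_n}\to\tnorm{x}$ and $\tnorm{x_n+x}\to2\tnorm{x}$ and consider the convexity gap $\tfrac12\tnorm{x_n}^2+\tfrac12\tnorm{x}^2-\tnorm{\tfrac{x_n+x}2}^2$, which tends to $0$. Writing it as the sum of the corresponding nonnegative gaps for $\norm{\cdot}_0^2$ and for $\normt{T\cdot}^2$ forces each summand to tend to $0$; the parallelogram law in $\ell_2$ turns the second one into $\tfrac14\normt{Tx_n-Tx}^2$, so $\normt{Tx_n-Tx}\to0$, and, $(x_n)$ being bounded, $x_n\to x$ weakly. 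Hence $\tnorm{\cdot}$ is $w\lur$ (so, a fortiori, strictly convex). Note that only convexity of $\norm{\cdot}_0^2$ is used here, never its rotundity, which leaves room to sabotage $\lur$.

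To see that $\tnorm{\cdot}$ is \emph{not} $\lur$ I would exhibit a weakly null perturbation of $z$ lying in the flat face. Choose a normalized weakly null sequence $(u_n)$ in $X$: such a sequence exists because an infinite-dimensional space with separable dual cannot contain $\ell_1$, and hence, by Rosenthal's theorem together with the standard coefficient estimate for normalized basic sequences, it admits one; this is exactly where infinite-dimensionality and the exclusion of the Schur property enter. Setting $x_n=z+u_n$, one has $g(u_n)\to0$, whence $\norm{z+u_n}_0\to1$ and $\norm{z+\tfrac12u_n}_0\to1$, while $\normt{Tu_n}\to0$; consequently $\tnorm{x_n}\to\tnorm{z}$ and $\tnorm{x_n+z}\to2\tnorm{z}$, yet $\tnorm{x_n-z}\ge\norm{u_n}_0\to\tfrac12$, so $(x_n)$ fails to converge to $z$ in norm. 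The main obstacle is precisely the balancing act in the choice of $\norm{\cdot}_0$: its flat face must be accessible only through directions that $\normt{T\cdot}$ cannot see, namely weakly null ones, so that the failure of rotundity remains invisible to the weak-topology term and $w\lur$ survives while $\lur$ breaks down.
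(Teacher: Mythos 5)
Your argument is correct, but it takes a genuinely different route from the paper's. The paper invokes Pe{\l}czy\'nski's theorem to obtain a bounded \emph{shrinking} M-basis $(e_n,e_n^\ast)$ of $X$, sets $\norm{x}_0=\max\{\frac12\norm{x},\sup_n|e_n^\ast(x)|\}$ (so the flat directions come from infinitely many biorthogonal functionals), proves the $w\lur$ property by packing $\norm{x_n}_0$ together with the weighted coordinates $2^{-m}e_m^\ast(x_n)$ into a single vector of $\ell_2$ and quoting the $\lur$ property of the Hilbert norm, and destroys $\lur$ with the explicit sequence $e_1+e_n$. You instead use only a dense sequence $(f_k)$ in the unit ball of $X^\ast$ to build the compact operator $T$, create the flat face with a single norming functional $g$, prove the $w\lur$ property via the convexity-gap decomposition plus the parallelogram law (both steps are standard and correct here), and destroy $\lur$ with $z+u_n$ for a normalized weakly null sequence $(u_n)$; all the estimates in your last paragraph check out. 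Your route avoids the M-basis machinery entirely, which is a genuine simplification of the construction; the price is that producing $(u_n)$ via Rosenthal's $\ell_1$-theorem imports a much deeper result---note that the paper's closing remark explicitly advertises that its argument is elementary precisely because it bypasses Rosenthal. This heaviness is unnecessary: since $X^\ast$ is separable, fix a dense sequence $(g_k)$ in $X^\ast$ and pick unit vectors $u_n\in\bigcap_{k\le n}\ker g_k$ (possible because $X$ is infinite-dimensional, so these intersections are nonzero); then $g_k(u_n)=0$ for $n\ge k$, and density of $(g_k)$ together with boundedness of $(u_n)$ gives $u_n\to0$ weakly, exactly as in the paper's Lemma \ref{lemma_weak}. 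With that substitution your proof becomes fully elementary and, arguably, isolates more transparently than the paper's the two mechanisms at work: a quadratic term that detects weak convergence, and a flat face reachable only along weakly null directions, which the quadratic term cannot see.
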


\begin{remark}
It is worth mentioning that the class of Banach spaces having a $w\lur$ renorming coincides with the class of Banach spaces having a $\lur$ renorming \cite[Theorem 1.11]{molto2009}. However, Theorem \ref{main_result} (and, all the more, Corollary \ref{corollary_tang}) suggests that in a large class of Banach spaces with a $w\lur$ renorming not every $w\lur$ norm is automatically $\lur$.
\end{remark}

\section{An example of a wLUR norm which is not LUR}
The norm
\begin{equation}\label{norm_c0}
\tnorm{x}=\normi{x}+\left(\sum_{n=1}^\infty2^{-n}|x(n)|^2\right)^{1/2}\quad\mbox{for }x\in c_0,
\end{equation}
where $\normi{\cdot}$ stands for the standard supremum norm, was given in \cite[p. 1]{molto2009} as an example of a strictly convex norm which is not $\lur$. Nonetheless, we shall show that this norm is $w\lur$.

\begin{lemma}\label{lemma_c0}
Suppose that $(x_n)_{n=1}^\infty\subset c_0$ is pointwise convergent to ${\alpha x}$, where ${\alpha\in[0,\infty)}$ and $x\in c_0\setminus\{0\}$. If
\[ \lim_{n\to\infty}\left(\normi{x_n+x}-\normi{x_n}\right)=\normi{x} \]
and the limit $\lim_{n\to\infty}\normi{x_n}$ exists, then $\lim_{n\to\infty}\normi{x_n}=\alpha\normi{x}$.
\end{lemma}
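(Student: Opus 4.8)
The plan is to write $M=\normi{x}>0$ and $L=\lim_{n\to\infty}\normi{x_n}$ (this limit exists by hypothesis), and to establish the two inequalities $L\geq\alpha M$ and $L\leq\alpha M$ separately. The lower bound is immediate: since $x\in c_0\setminus\{0\}$, the supremum defining $\normi{x}$ is attained at some coordinate $k_0$, so $|x(k_0)|=M$; pointwise convergence gives $|x_n(k_0)|\to\alpha|x(k_0)|=\alpha M$, and as $\normi{x_n}\geq|x_n(k_0)|$ for every $n$, passing to the limit yields $L\geq\alpha M$.

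For the reverse inequality I would fix $\varepsilon\in(0,M)$. The assumption $\normi{x_n+x}-\normi{x_n}\to M$ gives $\normi{x_n+x}>\normi{x_n}+M-\varepsilon$ for all large $n$, and since $x_n+x\in c_0$ one may choose a coordinate $i_n$ at which $\normi{x_n+x}$ is attained. The idea is that near-equality in the triangle inequality $\normi{x_n+x}\leq\normi{x_n}+\normi{x}$ forces $x$ to be close to its own maximum at $i_n$; indeed, from
\[ \normi{x_n}+M-\varepsilon<|x_n(i_n)+x(i_n)|\leq|x_n(i_n)|+|x(i_n)|\leq\normi{x_n}+|x(i_n)| \]
one reads off $|x(i_n)|>M-\varepsilon$, and bounding the middle term instead by $|x_n(i_n)|+M$ gives $|x_n(i_n)|>\normi{x_n}-\varepsilon$.

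The crucial point, and the main obstacle, is that the coordinate $i_n$ need not stabilise and could a priori run off to infinity, in which case the pointwise hypothesis would provide no control. This is precisely where $x\in c_0$ enters: the set $F_\varepsilon=\{k:|x(k)|>M-\varepsilon\}$ is finite, and the estimate $|x(i_n)|>M-\varepsilon$ confines $i_n$ to $F_\varepsilon$ for all large $n$. On this fixed finite set the maximum of the finitely many convergent sequences converges, so $\max_{k\in F_\varepsilon}|x_n(k)|\to\max_{k\in F_\varepsilon}\alpha|x(k)|=\alpha M$ (the last equality because $k_0\in F_\varepsilon$ and $|x(k)|\leq M$ throughout). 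Combining this with $\normi{x_n}-\varepsilon<|x_n(i_n)|\leq\max_{k\in F_\varepsilon}|x_n(k)|$ and letting $n\to\infty$ gives $L-\varepsilon\leq\alpha M$. As $\varepsilon\in(0,M)$ was arbitrary, $L\leq\alpha M$, and together with the lower bound this yields $L=\alpha M=\alpha\normi{x}$, as required.
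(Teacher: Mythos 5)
Your proof is correct. It rests on the same key observation as the paper's: near-equality in the triangle inequality forces the coordinate at which $\normi{x_n+x}$ is attained to be a near-maximizer of $|x|$, and since $x\in c_0$ such coordinates form a finite set, on which pointwise convergence gives full control. The implementations differ, though. The paper works with the exact maximizer set $K=\{k\colon|x(k)|=\normi{x}\}$, uses the strict gap $\max\{|x(l)|\colon l\notin K\}<\normi{x}$ to force attainment inside $K$ for large $n$, and then pigeonholes to fix a single $k_0\in K$ along a subsequence; the identity $(1+\alpha)|x(k_0)|=\lim_{l\to\infty}\normi{x_{n_l}}+\normi{x}$ then gives the conclusion in one stroke, the hypothesis that $\lim_{n\to\infty}\normi{x_n}$ exists being exactly what transfers the subsequential limit to the whole sequence. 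You instead split the claim into the two inequalities $L\ge\alpha\normi{x}$ (immediate from a single attaining coordinate of $x$) and $L\le\alpha\normi{x}$, and for the latter you replace $K$ by its $\varepsilon$-enlargement $F_\varepsilon$, which lets you dispense with subsequences and the pigeonhole altogether. A small bonus of your route is worth making explicit: your two estimates in fact show $\limsup_{n\to\infty}\normi{x_n}\le\alpha\normi{x}\le\liminf_{n\to\infty}\normi{x_n}$, so the hypothesis that the limit exists is never actually used --- your argument proves its existence --- whereas the paper's argument, as written, genuinely consumes that hypothesis (or would require the standard ``every subsequence has a further subsequence'' patch).
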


\begin{proof}
We shall show that the sequence $(\normi{x_n})_{n=1}^\infty$ has a subsequence which is convergent to $\alpha\normi{x}$.

Let
\[ K=\{k\colon |x(k)|=\normi{x}\}; \]
by our assumptions $K$ is a non-empty finite set. Furthermore, if $n$ and $k$ are positive integers such that ${k\notin K}$, then
\begin{align*}
|(x_n+x)(k)|-\normi{x_n} &\le|x_n(k)|+|x(k)|-\normi{x_n}\le|x(k)|\\
&\le\max\{|x(l)|\colon l\notin K\}<\normi{x}.
\end{align*}
It means that there is a $k_0\in K$ such that $|(x_n+x)(k_0)|=\normi{x_n+x}$ for infinitely many $n$. Let $(n_l)_{l=1}^\infty$ be a strictly increasing sequence of positive integers such that
\[ |(x_{n_l}+x)(k_0)|=\normi{x_{n_l}+x} \quad\mbox{for } l=1,2,\ldots. \]
Passing with $l$ to infinity we obtain
\[ (1+\alpha)|x(k_0)|=\lim_{l\to\infty}\normi{x_{n_l}+x}=\lim_{l\to\infty}\normi{x_{n_l}}+\normi{x}, \]
which completes the proof.
\end{proof}

\begin{proposition}
The norm given by \eqref{norm_c0} is $w\lur$.
\end{proposition}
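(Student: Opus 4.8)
The plan is to exploit the additive structure $\tnorm{\cdot}=\normi{\cdot}+\normt{\cdot}$, where I write $\normt{x}=\bigl(\sum_{n=1}^\infty 2^{-n}|x(n)|^2\bigr)^{1/2}$ for the Hilbertian summand, which arises from the inner product $\langle x,y\rangle=\sum_{n=1}^\infty 2^{-n}x(n)y(n)$. Fix $x_n$, $x$ on the unit sphere with $\lim_n\tnorm{x_n+x}=2$. Since the dual of $c_0$ is $\ell_1$, a bounded sequence in $c_0$ converges weakly precisely when it converges coordinatewise (one invokes dominated convergence against an arbitrary $\ell_1$ functional), so it suffices to prove $x_n\to x$ pointwise. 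By the subsequence principle it is enough to show that every subsequence admits a further subsequence converging coordinatewise to $x$; so I fix a subsequence and, by Bolzano--Weierstrass, extract a further one along which $\normi{x_n}\to a^*$ and $\normt{x_n}\to b^*$. Since $\normi{x_n}+\normt{x_n}=1$ we have $a^*+b^*=1$, and likewise $a+b=1$ where $a=\normi{x}$, $b=\normt{x}$ (note $b>0$ as $x\ne 0$).

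First I would split the limit. From $\normi{x_n+x}\le\normi{x_n}+\normi{x}$ and $\normt{x_n+x}\le\normt{x_n}+\normt{x}$, whose right-hand sides sum to $2$, the hypothesis forces both inequalities to become equalities in the limit: one gets $\normi{x_n+x}-\normi{x_n}\to\normi{x}$ and $\normt{x_n+x}\to b^*+b$.

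Next I would extract the pointwise limit from the Hilbertian part. Because $\normt{\cdot}$ comes from an inner product, polarization together with $\normt{x_n}\to b^*$ and $\normt{x_n+x}\to b^*+b$ gives $\langle x_n,x\rangle\to b^*b$; expanding $\normt{x_n-\alpha x}^2$ with $\alpha=b^*/b$ then yields $\normt{x_n-\alpha x}\to 0$. Hence $x_n\to\alpha x$ in the weighted $\ell_2$ norm, and in particular coordinatewise, with $\alpha\ge 0$ and $x\in c_0\setminus\{0\}$. This is exactly the hypothesis of Lemma \ref{lemma_c0}, which (using the first relation of the previous paragraph and the fact that $\lim_n\normi{x_n}=a^*$ exists along the subsequence) yields $a^*=\alpha\normi{x}=\alpha a$.

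Finally I would close the loop with elementary algebra: $a^*=\alpha a=(b^*/b)a$ gives $a^*b=ab^*$, and combining this with $a^*+b^*=1=a+b$ forces $a^*=a$, whence $\alpha=b^*/b=1$ and $x_n\to x$ coordinatewise along the subsequence. As this holds for every subsequence, $x_n\to x$ weakly, proving the norm is $w\lur$. The hard part will be the third step: the Lemma can only be applied once the pointwise limit is known to be a scalar multiple of $x$, and this proportionality is not available a priori. The crux of the proof is therefore to manufacture it from the Hilbertian summand via the equality case of the Cauchy--Schwarz inequality before the Lemma is brought to bear.
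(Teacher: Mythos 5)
Your proposal is correct and follows essentially the same route as the paper's own proof: split the defect $2-\tnorm{x_n+x}$ into the two nonnegative parts coming from $\normi{\cdot}$ and the Hilbertian summand, pass to a subsequence where both $\normi{x_n}$ and $\normt{x_n}$ converge, use the inner-product expansion to get $\normt{x_n-\alpha x}\to 0$ with $\alpha$ the ratio of the Hilbertian limits, then feed the resulting pointwise convergence into Lemma \ref{lemma_c0} and finish with the algebra forcing $\alpha=1$. The only differences are cosmetic (you work with the weighted inner product on $c_0$ directly, whereas the paper transports to $\ell_2$ via $y_n(k)=2^{-k/2}x_n(k)$, and your final algebra $a^\ast b=ab^\ast$ is the paper's $1=\alpha\tnorm{x}$ in disguise).
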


\begin{proof}
Fix a sequence $(x_n)_{n=1}^\infty$ in the unit sphere of $(c_0,\tnorm{\cdot})$ and a point $x$ from this sphere such that $\lim_{n\to\infty}\tnorm{x_n+x}=2$. We shall show that each subsequence of $(x_n)_{n=1}^\infty$ has a subsequence which is weakly convergent to $x$. To this end fix an arbitrary subsequence of $(x_n)_{n=1}^\infty$ which for convenience will be still denoted by $(x_n)_{n=1}^\infty$.

Set
\[ y_n=\left(2^{-k/2}x_n(k)\right)_{k=1}^\infty\mbox{ for }n=1,2,\ldots\quad\mbox{ and }\quad y=\left(2^{-k/2}x(k)\right)_{k=1}^\infty. \]
The equality
\begin{align*}
2-\tnorm{x_n+x} &=\tnorm{x_n}+\tnorm{x}-\tnorm{x_n+x}\\
&=\normi{x_n}+\normi{x}-\normi{x_n+x}+\normt{y_n}+\normt{y}-\normt{y_n+y},
\end{align*}
where $\normt{\cdot}$ stands for the norm in $\ell_2$, implies the existence and the equality of the following limits:
\begin{equation}\label{xlinfty}
\lim_{n\to\infty}\left(\normi{x_n}+\normi{x}-\normi{x_n+x}\right)=0
\end{equation}
and
\begin{equation}\label{yl2}
\lim_{n\to\infty}\left(\normt{y_n}+\normt{y}-\normt{y_n+y}\right)=0.
\end{equation}
Passing to a further subsequence of $(x_n)_{n=1}^\infty$ (still denoted by $(x_n)_{n=1}^\infty$) we may assume that the limits $\lim_{n\to\infty}\normi{x_n}$ and $\lim_{n\to\infty}\normt{y_n}$ exist. Using the equality \eqref{yl2} we obtain
\[ \lim_{n\to\infty}(\normt{y_n}+\normt{y})^2=\lim_{n\to\infty}\normt{y_n+y}^2=\lim_{n\to\infty}(\normt{y_n}^2+2(y_n|y)+\normt{y}^2), \]
where $(\cdot|\cdot)$ stands for the real inner product. Whence
\[ \lim_{n\to\infty}(y_n|y)=\lim_{n\to\infty}\normt{y_n}\cdot\normt{y}=\alpha\normt{y}^2, \]
where $\alpha=\lim_{n\to\infty}\normt{y_n}/\normt{y}$. Thus
\begin{align*}
\lim_{n\to\infty}\normt{y_n-\alpha y}^2 &=\lim_{n\to\infty}\left(\normt{y_n}^2-2\alpha(y_n|y)+\alpha^2\normt{y}^2\right)\\
&=\alpha^2\normt{y}^2-2\alpha^2\normt{y}^2+\alpha^2\normt{y}^2=0,
\end{align*}
which means that the sequence $(y_n)_{n=1}^\infty$ is convergent (in the space $\ell_2$) to $\alpha y$. In particular, the sequence $(y_n)_{n=1}^\infty$ is pointwise convergent to $\alpha y$, therefore the sequence $(x_n)_{n=1}^\infty$ is pointwise convergent to $\alpha x$. By the equality \eqref{xlinfty} and Lemma \ref{lemma_c0}, $\lim_{n\to\infty}\normi{x_n}=\alpha\normi{x}$. Therefore
\begin{align*}
1 &=\lim_{n\to\infty}\tnorm{x_n}=\lim_{n\to\infty}\normi{x_n}+\lim_{n\to\infty}\normt{y_n}\\
&=\alpha\normi{x}+\alpha\normt{y}=\alpha\tnorm{x}=\alpha.
\end{align*}
Finally, the sequence $(x_n)_{n=1}^\infty$ is weakly convergent to $x$ as it is bounded and converges pointwise to this point.
\end{proof}

\section{The proof of the main result}
Throughout this section $X$ denotes an infinite-dimensional Banach space. We shall need a simple lemma about the weak convergence (a trivial proof will be omitted).

\begin{lemma}\label{lemma_weak}
Assume that $(x_n)_{n=1}^\infty$ is a bounded sequence in $X$, $\Gamma$ is a set and $\{x^\ast_\gamma\colon \gamma\in\Gamma \}\subset X^\ast$. If the space $\mathrm{span}\{x^\ast_\gamma\colon \gamma\in\Gamma \}$ is dense in $X^\ast$ and
\[ \lim_{n\to\infty}x^\ast_\gamma(x_n)=0\quad\mbox{for each }\gamma\in\Gamma, \]
then the sequence $(x_n)_{n=1}^\infty$ is weakly null.
\end{lemma}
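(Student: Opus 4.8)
The plan is to reduce weak nullity to the behaviour of $(x_n)_{n=1}^\infty$ against a dense set of functionals, using the uniform bound on the sequence to control the approximation error. Writing $M=\sup_n\norm{x_n}<\infty$, it suffices to prove that $x^\ast(x_n)\to 0$ for an arbitrary $x^\ast\in X^\ast$; this is precisely the assertion of weak convergence to $0$.

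Fix such an $x^\ast$ and let $\varepsilon>0$. First I would invoke the density of $\mathrm{span}\{x^\ast_\gamma\colon\gamma\in\Gamma\}$ to pick a functional $y^\ast=\sum_{i=1}^k c_i x^\ast_{\gamma_i}$ with $\norm{x^\ast-y^\ast}<\varepsilon$. The point of passing to $y^\ast$ is that it is a \emph{finite} linear combination of the $x^\ast_\gamma$, so the hypothesis $\lim_{n}x^\ast_\gamma(x_n)=0$ for each $\gamma$ yields at once $\lim_n y^\ast(x_n)=\sum_{i=1}^k c_i\lim_n x^\ast_{\gamma_i}(x_n)=0$. Hence there is an $N$ with $|y^\ast(x_n)|<\varepsilon$ for all $n\ge N$.

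Then I would split $x^\ast(x_n)=(x^\ast-y^\ast)(x_n)+y^\ast(x_n)$ and estimate the first summand by $\norm{x^\ast-y^\ast}\cdot\norm{x_n}\le\varepsilon M$, using the uniform bound, while the second summand is below $\varepsilon$ for $n\ge N$. Thus $|x^\ast(x_n)|\le(M+1)\varepsilon$ for all sufficiently large $n$, and letting $\varepsilon\to0$ gives $x^\ast(x_n)\to0$, as required.

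Since every ingredient is elementary, I do not anticipate a genuine obstacle here; the only points requiring care are that the approximation $y^\ast$ be a finite linear combination — which is exactly what $\mathrm{span}$ guarantees — so that the pointwise hypothesis can be applied term by term, and that the uniform bound $M$ (rather than a merely pointwise one) be used to keep the tail estimate independent of $n$. This is presumably why the author deems the proof trivial and omits it.
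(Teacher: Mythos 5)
Your proof is correct and is precisely the standard density-plus-boundedness approximation argument that the paper has in mind when it declares the proof trivial and omits it: approximate $x^\ast$ by a finite linear combination $y^\ast$ from the span, use the pointwise hypothesis on $y^\ast$, and control the error term $(x^\ast-y^\ast)(x_n)$ uniformly via the bound $M$. Nothing is missing; this is the same (and essentially the only) route.
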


\begin{proof}[Proof of Theorem \ref{main_result}]
Assume that $X^\ast$ is separable. According to the result of A. Pe{\l}czy\'nski \cite[Remark A]{pelczynski1976} there exists an $M$-basis $(e_n, e^\ast_n)_{n=1}^\infty$ of the space $X$ which is both bounded and shrinking. It means that
\[ \sup\{\norm{e_n}\!\cdot\!\norm{e_n^\ast}\colon n=1,2\ldots\}<\infty \]
and the functionals $e_n^\ast$ are linearly dense in $X^\ast$.

Without loss of generality we may assume that $\norm{e_n}=1$ for $n=1,2,\ldots$. Define a functional $\norm{\cdot}_0\colon X\to[0,\infty)$ by
\[ \norm{x}_0=\max\left\{\frac12\norm{x},\sup\limits_{n}|e_n^\ast(x)|\right\}\quad\mbox{for }x\in X. \]
One can easily see that $\norm{\cdot}_0$ is a norm on $X$ and by the boundedness of the $M$-basis $(e_n, e^\ast_n)_{n=1}^\infty$ this norm is equivalent to the original one.

Define a functional $\tnorm{\cdot}\colon X\to[0,\infty)$ by
\[ \tnorm{x}^2=\norm{x}^2_0+\sum_{n=1}^\infty4^{-n}\left|e^\ast_n(x)\right|^2\quad\mbox{for }x\in X. \]
One can easily observe that $\tnorm{\cdot}$ is an equivalent norm on $X$. We shall show that it is $w\lur$ but not $\lur$.

For the proof of the first part consider a sequence $(x_n)_{n=1}^\infty$ and a point $x$ in the unit sphere of $(X,\tnorm{\cdot})$ such that $\lim_{n\to\infty}\tnorm{x_n+x}=2$. Set
\[ y_n=\left(\norm{x_n}_0,\,2^{-1}e^\ast_1(x_n),\,2^{-2}e^\ast_2(x_n),\,\ldots\right)\quad\mbox{for }n=1,2,\ldots \]
and
\[ y=\left(\norm{x}_0,\,2^{-1}e^\ast_1(x),\,2^{-2}e^\ast_2(x),\,\ldots\right). \]
We have
\begin{align*}
\normt{y_n+y}^2 &=(\norm{x_n}_0+\norm{x}_0)^2+\sum_{m=1}^\infty4^{-m}\left|e^\ast_m(x_n+x)\right|^2\\
&\ge\norm{x_n+x}_0^2+\sum_{m=1}^\infty4^{-m}\left|e^\ast_m(x_n+x)\right|^2=\tnorm{x_n+x}^2\,\stackrel[n\to\infty]{}{\xrightarrow{\hspace*{1cm}}}4,
\end{align*}
and by the local uniform rotundity of the norm in the (Hilbert) space $\ell_2$, we obtain $\lim_{n\to\infty}\normt{y_n-y}=0$. In particular,
\[ \lim_{n\to\infty}e^\ast_m(x_n)=e^\ast_m(x)\quad\mbox{for }m=1,2,\ldots. \]
Lemma \ref{lemma_weak} and the fact that the $M$-basis $(e_n, e^\ast_n)_{n=1}^\infty$ is shrinking give the weak convergence of the sequence $(x_n)_{n=1}^\infty$ to $x$.

To see that the norm $\tnorm{\cdot}$ is not $\lur$ consider the sequence $(e_1+e_n)_{n=1}^\infty$ and the point $e_1$. One can easily verify that
\[ \lim_{n\to\infty}\tnorm{e_1+e_n}=\frac12\sqrt5=\tnorm{e_1} \]
and
\[ \lim_{n\to\infty}\tnorm{2e_1+e_n}=\sqrt5, \]
while $\tnorm{e_n}\ge1$ for $n=1,2,\ldots$.
\end{proof}

\begin{corollary}\label{corollary_tang}
Every Banach space which admits an equivalent $\lur$ norm, in particular every separable Banach space, and has an infinite-dimensional subspace with separable dual admits an equivalent $w\lur$ norm which is not $\lur$.
\end{corollary}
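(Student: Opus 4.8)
The plan is to reduce to Theorem \ref{main_result} applied to the subspace and then to carry the construction over to the whole space through the quotient $X/Y$. First I would fix an equivalent $\lur$ norm $\norm{\cdot}$ on $X$ and an infinite-dimensional subspace $Y\subseteq X$ with separable dual; then $Y$ is itself separable, so running the proof of Theorem \ref{main_result} on $Y$ supplies a bounded shrinking $M$-basis $(e_n,e^\ast_n)_{n=1}^\infty$ of $Y$, which I normalise so that $\norm{e_n}=1$. Using the Hahn--Banach theorem I extend each $e^\ast_n$ to $\tilde e^\ast_n\in X^\ast$ with $\norm{\tilde e^\ast_n}=\norm{e^\ast_n}$; boundedness of the $M$-basis gives $\sup_n\norm{\tilde e^\ast_n}<\infty$. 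Let $Q\colon X\to X/Y$ denote the quotient map.

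The crucial point, and the main obstacle, is that $X/Y$ should carry an equivalent $\lur$ norm $\norm{\cdot}_Q$ (equivalently, by the theorem quoted in the Remark, a $w\lur$ one). When $X$ is separable this is automatic, since $X/Y$ is then separable and hence $\lur$-renormable; in general one must appeal to the fact that a quotient of an $\lur$-renormable space by a separable subspace is again $\lur$-renormable. Granting such a norm, I would put $\norm{x}_0=\max\{\tfrac12\norm{x},\sup_n|\tilde e^\ast_n(x)|\}$, an equivalent norm on $X$ that reduces on $Y$ to the sup-type norm used in Theorem \ref{main_result}, and define
\[ \tnorm{x}^2=\norm{x}_0^2+\norm{Qx}_Q^2+\sum_{n=1}^\infty4^{-n}|\tilde e^\ast_n(x)|^2\qquad(x\in X). \]
Equivalence to the original norm is clear, as $\tnorm{x}\ge\norm{x}_0\ge\tfrac12\norm{x}$ and every summand is dominated by $\norm{x}$.

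To verify that $\tnorm{\cdot}$ is $w\lur$ I would take $x_k$ and $x$ on the unit sphere with $\lim_k\tnorm{x_k+x}=2$ and split $2\tnorm{x_k}^2+2\tnorm{x}^2-\tnorm{x_k+x}^2$ into the three non-negative convex defects coming from the three summands; since their sum tends to $0$, each does. The $\ell_2$-summand then yields, via the parallelogram law, $\normt{\cdot}$-convergence of the vectors $(2^{-n}\tilde e^\ast_n(x_k))_n$, hence $\tilde e^\ast_n(x_k)\to\tilde e^\ast_n(x)$ for every $n$; the $\norm{\cdot}_Q$-summand, by local uniform rotundity of $\norm{\cdot}_Q$, gives $\norm{Qx_k-Qx}_Q\to0$, so that $g(x_k)\to g(x)$ for every $g\in Y^\perp=(X/Y)^\ast$. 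Because the $M$-basis is shrinking, $\mathrm{span}\{e^\ast_n\}$ is dense in $Y^\ast$, and the isometry $X^\ast/Y^\perp\cong Y^\ast$ (identifying $\mathrm{dist}(h,Y^\perp)$ with $\norm{h|_Y}$) shows that $\mathrm{span}\{\tilde e^\ast_n\}+Y^\perp$ is dense in $X^\ast$. Applying Lemma \ref{lemma_weak} to the bounded sequence $(x_k-x)_k$ would then give the weak convergence of $(x_k)$ to $x$.

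Finally, to see that $\tnorm{\cdot}$ is not $\lur$ I would test it on $(e_1+e_n)_{n=1}^\infty$ and the point $e_1$. These all lie in $Y$, where $\norm{Q\cdot}_Q$ vanishes and $\tilde e^\ast_n$ restricts to $e^\ast_n$; thus $\tnorm{\cdot}$ restricted to $Y$ is exactly the norm of Theorem \ref{main_result}, and the same computation gives $\lim_n\tnorm{2e_1+e_n}=2\tnorm{e_1}$ with $\lim_n\tnorm{e_1+e_n}=\tnorm{e_1}$, while $\tnorm{e_n}\ge1$, so norm convergence fails. The only genuinely non-routine ingredient is the $\lur$-renormability of $X/Y$; once that is in hand, everything else is a recombination of the estimates already established for Theorem \ref{main_result} together with Lemma \ref{lemma_weak}.
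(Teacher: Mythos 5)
Your construction is sound in all of its routine parts: the splitting of the convexity defect $2\tnorm{x_k}^2+2\tnorm{x}^2-\tnorm{x_k+x}^2$ into three non-negative summands, the density of $\mathrm{span}\{\tilde e^\ast_n\}+Y^\perp$ in $X^\ast$ via the identification $X^\ast/Y^\perp\cong Y^\ast$, the application of Lemma \ref{lemma_weak}, and the observation that the non-$\lur$ witness $(e_1+e_n)_{n=1}^\infty$ lives inside $Y$, where your norm coincides with the one from Theorem \ref{main_result}. The problem is the step you yourself flag and then simply grant: that $X/Y$ admits an equivalent $\lur$ norm. This is not a known fact you can ``appeal to,'' and it is exactly where the difficulty of the corollary sits. $\lur$-renormability is \emph{not} preserved by quotients in general: every Banach space is a quotient of some $\ell_1(\Gamma)$, which is $\lur$-renormable, while $\ell_\infty$ admits no $w\lur$ norm at all (as recalled in the introduction of the paper); so any argument would have to exploit the separability of the kernel $Y$ in an essential way, and you give no such argument. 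To my knowledge no theorem asserting ``quotient of an $\lur$-renormable space by a separable subspace is $\lur$-renormable'' is available in the literature, so as written your proof only covers the case where $X$ itself is separable (there $X/Y$ is separable and Kadec's theorem applies), not the full statement, which concerns arbitrary $\lur$-renormable $X$.

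The paper's proof sidesteps this entirely: it applies Theorem \ref{main_result} to $Y$ and then invokes Tang's extension theorem \cite[Theorem 1.1]{tang1996} (in its $w\lur$ variant, cf.\ the paper's subsequent remark), whose hypothesis is precisely that $X$ admits an equivalent $\lur$ norm, and which produces an equivalent $w\lur$ norm on $X$ restricting to the given norm on $Y$; the restriction is then non-$\lur$ by the same sequence $(e_1+e_n)_{n=1}^\infty$. The content of Tang's theorem is that the rotundity in the directions transverse to $Y$ can be extracted from the $\lur$ norm on $X$ itself (via an infimal-convolution type construction), rather than from any renorming of the quotient $X/Y$ --- which is precisely the ingredient your direct approach is missing. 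If you want a self-contained argument along your lines, you must either prove your quotient claim or replace the term $\norm{Qx}_Q^2$ by a device that uses the $\lur$ norm of $X$ without forcing $\tnorm{\cdot}$ to become $\lur$ on $Y$; simply adding a multiple of the $\lur$ norm of $X$ squared will not do, since that would make $\tnorm{\cdot}$ $\lur$ everywhere and destroy the counterexample.
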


\begin{proof}
Suppose that $Y$ is an infinite-dimensional subspace of $X$ with separable dual. By Theorem \ref{main_result} the space $Y$ admits an equivalent $w\lur$ norm which is not $\lur$. According to Tang's Theorem \cite[Theorem 1.1]{tang1996} it extends to an equivalent $w\lur$ norm on the whole $X$. Obviously, this extension fails to be $\lur$.
\end{proof}

\begin{remark}
The statement of Tang's Theorem does not include the case of $w\lur$ norm literally, however, the theorem is also valid in this case (cf. \cite[Remark 1.1]{tang1996}). Indeed, one can easily verify that the proof works without major changes.
\end{remark}

\begin{remark}
Corollary \ref{corollary_tang} implies that every Banach space which admits an equivalent $\lur$ norm, in particular every separable Banach space, and enjoys the Schur property has no infinite-dimensional subspace with separable dual. Of course, it is not a new result, as it is well-known that every Banach space having the Schur property is $\ell_1$-saturated. However, this fact follows from Rosenthal's $\ell_1$-Theorem (cf. \cite[\S 10.2]{albiac2006}), so its proof is much less elementary than the one given in this paper.
\end{remark}

\subsection*{Acknowledgement}
This research was supported by University of Silesia Mathematics Department (Iterative Functional Equations and Real Analysis program).

\end{document}